\documentclass[11pt]{article}

\usepackage{tikz}
\usepackage{subfigure}
\usepackage[english]{babel}

\usepackage[center]{caption2}
\usepackage{amsfonts,amssymb,amsmath,latexsym,amsthm}
\usepackage{multirow}

\topmargin  = -0.4 in \oddsidemargin = 0.25 in
\setlength{\textheight}{8.5in} \setlength{\textwidth}{6in}
\setlength{\unitlength}{1.0 mm}

\def\ex{\mbox{ex}}

\newtheorem{thm}{Theorem}[section]

\newtheorem{lem}[thm]{Lemma}
\newtheorem{prop}[thm]{Proposition}
\newtheorem{conj}[thm]{Conjecture}

\newtheorem{obs}[thm]{Observation}
\newtheorem{claim}[thm]{Claim}

\begin{document}

\title{Generalized Tur\'an problem with bounded  matching number
\thanks{The work was supported by National Natural Science Foundation of China (No.12401455) to Yue Ma;  National Natural Science Foundation of China (No.12471336, 12071453) and Innovation Program for Quantum Science and Technology (2021ZD0302902) to Xinmin Hou.}}
\author{Yue Ma$^a$, \quad Xinmin Hou$^{b,c}$, \quad Zhi Yin$^b$\\
\small $^{a}$School of Mathematics and Statistics,\\
\small Nanjing University of Science and Technology, Nanjing, Jiangsu 210094, China.\\
\small $^{b}$ School of Mathematical Sciences\\
\small University of Science and Technology of China, Hefei, Anhui 230026, China.\\
\small $^{c}$ Hefei National Laboratory,\\
\small University of Science and Technology of China, Hefei, Anhui 230088, China.\\
\small $^a$yma@njust.edu.cn; $^b$xmhou@ustc.edu.cn, yinzhi@mail.ustc.edu.cn
}

\date{}

\maketitle

\begin{abstract}
For a graph $T$ and a set of graphs $\mathcal{H}$, let $\ex(n,T,\mathcal{H})$ denote the maximum number of copies of $T$ in an $n$-vertex $\mathcal{H}$-free graph.
Recently, Alon and Frankl~( Journal of Combinatorial Theory, Series B, 2024) determined the exact value of $\ex(n,K_2,\{K_{k+1},M_{s+1}\})$, where $K_{k+1}$ and $M_{s+1}$ are complete graph on $k+1$ vertices and matching of size $s+1$, respectively. 
In this paper, we continue the study of the function $\ex(n, T,\{K_{k+1},M_{s+1}\})$. We determine the exact value of $\ex(n,K_r,\{K_{k+1},M_{s+1}\})$ for $r\ge 3$ and the exact value of $\ex(n,S_r,\{K_{k+1},M_{s+1}\})$ for $n\ge 2(s+1)(r+1)$ and $r\ge 2$.
\end{abstract}

\section{Introduction}
Let $G=(V,E)$ be a graph with vertex set $V=V(G)$ and edge set $E=E(G)\subset\binom{V}{2}$. We may write $G$ instead of $E(G)$. 

Let $T$ be a fixed graph and $\mathcal{H}$ be a set of given graphs. A graph $G$ is called {\em $\mathcal{H}$-free} if $G$ contains no copy of any member in $\mathcal{H}$ as its subgraph.  Write $N(G,T)$ for the number of copies of $T$ in a graph $G$. 
Define the generalized Tur\'an number as 
$$\ex(n, T, \mathcal{H})=\max\{N(G, T) : G \text{ is an $n$-vertex $\mathcal{H}$-free graph}\}.$$ 
We call an $n$-vertex graph $G$ with $N(G, T)$ attaining the maximum an {\em extremal graph} of $\mathcal{H}$.
This function has been systematically studied by Alon and Shikhelman~\cite{2016Many} and has received much attention, for example, in~\cite{G21DMGT,G21DM,G21arX,G22arX2,GERBNER2020169,GP19,GP20,Hei21,2020Few,doi:10.1137/19M1239052}. 
When $T=K_2$,  it is the classical Tur\'an number $\ex(n, \mathcal{H})$.

Let $K_r$ denote a complete graph on $r$ vertices for some integer $r$.
For a set  $U$, write $K[U]$ for a complete graph on vertex set $U$. 
Let  $U_1,U_2,\ldots,U_r$ be disjoint sets and $\mathcal{U}=\{U_1,\ldots,U_r\}$, write $K[\mathcal{U}]=K[U_1,U_2,\ldots,U_r]$ for a complete $r$-partite graph with partition sets $U_1,\ldots,U_r$.
Let $G=(V, E)$ be a graph. For a set $U\subseteq V$, write $G[U]$ for the subgraph induced by $U$. 
For disjoint  sets $U_1,U_2,\cdots,U_r\subseteq V$, write $G[U_1,\cdots, U_r]$ for the induced $r$-partite subgraph of $G$, i.e. $G[U_1,\cdots, U_r]=K[U_1,\cdots, U_r]\cap G$. Let $K_{a_1,a_2,\cdots,a_r}$ denote the complete $r$-partite graph with partition sets of size $a_1,a_2,\cdots,a_r$. In particular, $S_r=K_{1,r}$ is the star with $r$ edges. We call the only vertex incident to all the edges in a star be the center of the star and call every other vertex a leaf of the star.
For graphs $G_1,\cdots, G_r$, let $\sum_{i=1}^rG_i$ be the union of vertex-disjoint copies of $G_1,\cdots, G_r$. 

A Tur\'an graph $T_{k}(n)$ is a complete $k$-partite graph on $n$ vertices whose partition sets have sizes as equal as possible. Let $t_{k}(n)=|T_{k}(n)|=N(T_k(n),K_2)$.

The famous Tur\'an Theorem~\cite{Tr} states that $\ex(n,K_2,K_{k+1})= t_{k}(n)$.
Zykov~\cite{Zykov} gave the  generalized version of Tur\'an Theorem as follows.

\begin{thm}[\cite{Zykov}, see also \cite{GTr}]\label{GTr}
For all $n\ge k\ge r\ge 2$, $$ex(n,K_r,K_{k+1})=N(T_k(n),K_r),$$ and $T_{k}(n)$ is the unique extremal graph.
\end{thm}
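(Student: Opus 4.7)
My plan is to prove Theorem~\ref{GTr} via Zykov-type symmetrization, adapted to counting $K_r$'s rather than edges. Given a $K_{k+1}$-free graph $G$ and non-adjacent vertices $u,v$, define the shift $G_{u\to v}$ by deleting every edge incident to $v$ and then joining $v$ to every vertex of $N_G(u)\setminus\{v\}$, so that $u$ and $v$ remain non-adjacent and have the same neighborhood in $G_{u\to v}$. I would first verify two properties: (a) $G_{u\to v}$ is still $K_{k+1}$-free, since $u$ and $v$ lie in no common clique of $G_{u\to v}$, and any $K_{k+1}$ in $G_{u\to v}$ can be mapped to a $K_{k+1}$ in $G$ by replacing $v$ with $u$ if necessary; (b) writing $N(G,K_r,x)$ for the number of $K_r$'s of $G$ containing $x$, one has
\[
N(G_{u\to v},K_r)-N(G,K_r)=N(G,K_r,u)-N(G,K_r,v),
\]
because all $K_r$'s avoiding $\{u,v\}$ are unchanged, those through $u$ only are unchanged, those through $v$ only are replaced by the analogous count through $u$, and those through both vanish and are not recreated (since $u\not\sim v$). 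Thus, by choosing the direction of the shift that gives the larger of $N(G,K_r,u)$ and $N(G,K_r,v)$, the operation never decreases $N(\cdot,K_r)$.

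Next I would iterate these shifts to reduce to a complete multipartite graph. After finitely many shifts, non-adjacency becomes transitive (any two non-adjacent vertices end up sharing neighborhoods, and then any third vertex non-adjacent to one must be non-adjacent to the other), so the final graph $G^\ast$ is a complete $k'$-partite graph on $n$ vertices, with $k'\le k$ by the $K_{k+1}$-free condition, and $N(G^\ast,K_r)\ge N(G,K_r)$. Among complete multipartite graphs with part sizes $a_1,\dots,a_{k'}$, the number of $K_r$'s equals
\[
\sum_{S\in\binom{[k']}{r}}\prod_{i\in S}a_i.
\]
A standard smoothing argument (moving one vertex from a larger part to a smaller part strictly increases this sum whenever two part sizes differ by at least two, and adding an empty extra part to reach $k'=k$ only increases it) shows that this sum is uniquely maximized by the balanced $k$-partite graph $T_k(n)$.

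The subtle point, and where I expect the main obstacle to lie, is uniqueness: the shift operation can preserve the $K_r$-count even while changing the graph, so one cannot conclude $G=T_k(n)$ merely by running the symmetrization. To handle this I would track the equality case: if $G$ is extremal and $u\not\sim v$ with $N(G,K_r,u)=N(G,K_r,v)$, then double-counting $K_r$'s through $\{u,v,w\}$ for each $w$ together with extremality at each intermediate step forces the symmetric difference of $N_G(u)$ and $N_G(v)$ to be very constrained; iterating, I would argue that $G$ is already complete multipartite, at which point the uniqueness of $T_k(n)$ among complete multipartite $K_{k+1}$-free graphs (from the smoothing step) finishes the proof. An alternative I would keep in reserve is a direct induction on $n$ (deleting a vertex of smallest degree and applying the inductive hypothesis to the remainder together with the Erd\H os--Ko--Rado style bound on $K_r$'s through the deleted vertex), which bypasses the equality analysis but requires more bookkeeping.
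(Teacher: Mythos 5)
The paper does not actually prove this statement: Theorem~\ref{GTr} is Erd\H{o}s's theorem, quoted from the literature and used as a black box, so there is no internal proof to compare against. That said, your symmetrization strategy is the standard route to it, and it is essentially the same machinery the paper builds for its own arguments in Section~2: your identity $N(G_{u\rightarrow v},K_r)-N(G,K_r)=N(G,K_r,u)-N(G,K_r,v)$ is Proposition~\ref{PROP:p1}, and your claim (a) that the shift preserves $K_{k+1}$-freeness is exactly the replacement argument inside Claim~\ref{B}. The reduction to complete multipartite graphs and the smoothing computation for $\sum_{S\in\binom{[k']}{r}}\prod_{i\in S}a_i$ are correct and standard (one should note separately that a multipartite graph with fewer than $r$ nonempty parts has no $K_r$ at all, so the strictness of each balancing move is available whenever it is needed).

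Two steps of your outline are genuinely incomplete. First, termination: ``after finitely many shifts, non-adjacency becomes transitive'' is not automatic. For $r=2$ the classical Zykov argument gets a strict gain when it merges $u$ and $w$ with $u\sim w$ and $u\not\sim v\not\sim w$, because the deleted edge $uw$ itself contributes $+1$; for $r\ge 3$ the corresponding gain is the number of copies of $K_r$ containing both $u$ and $w$, which can be zero, so the move need not strictly increase $N(\cdot,K_r)$ and the process can stall or cycle. You need a secondary extremal choice to force progress --- for instance, among extremal graphs take one minimizing the number of distinct neighborhood classes, which is precisely the paper's assumption (**) in its proof of Theorem~\ref{main1}. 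Second, uniqueness: you correctly identify this as the delicate point, but what you offer is a plan, not a proof; the claim that equality of the clique-degrees ``forces the symmetric difference of $N_G(u)$ and $N_G(v)$ to be very constrained'' is not substantiated, and since uniqueness of $T_k(n)$ is part of the statement, this is a real gap rather than a routine detail. Either carry out the equality analysis (showing an extremal graph must already be complete multipartite, e.g.\ via the secondary potential above), or switch to your reserve plan of induction on $n$, where the equality case can be tracked explicitly.
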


Write $\chi(G)$ for the chromatic number of graph $G$.
We say a graph is {\it edge-critical} if there exists some edge whose deletion reduces its chromatic number. Simonovits~\cite{Critical} proved that for any edge-critical graph $H$ with $\chi(H)=k+1\ge 3$, $\ex(n,K_2,H)=t_k(n)$ for sufficiently large $n$, and $T_{k}(n)$ is the unique extremal graph. This result was extended by Ma and Qiu~\cite{MQ} as follows:  For sufficiently large $n$, $\ex(n,K_r,H)=N(T_k(n),K_r)$,  and $T_{k}(n)$ is the unique extremal  graph, where $H$ is  an edge-critical graph with $\chi(H)=k+1>r\ge 2$.

Let $G_k(n,s)=\overline{K_{n-s}}\vee T_{k-1}(s)$, the join of an  empty graph $\overline{K_{n-s}}$ and Tur\'an graph $T_{k-1}(s)$,  i.e. a complete $k$-partite graph on $n$ vertices with one partition set of size $n-s$ and the others having sizes as equal as possible. 
Write $M_k$ for a matching consisting of $k$ edges.
Another fundamental result in graph theory is the
Erd\H{o}s-Gallai Theorem, showing that $$\ex(n, K_2, M_{s+1})=\max\left\{\left|E(G_{s+1}(n, s))\right|, {2s+1\choose 2}\right\}.$$ 
Recently, Alon and Frankl~\cite{AF} studied the function $\ex(n,K_2,\mathcal{H})$ when $\mathcal{H}=\{K_{k+1}, M_{s+1}\}$.

\begin{thm}[\cite{AF}]\label{AF}
For $n\ge 2s+1$ and $k\ge 2$, 
$$\ex(n,K_2,\{K_{k+1},M_{s+1}\})=\max\left\{|T_k(2s+1)|,|G_k(n,s)|\right\}\mbox{.}$$
\end{thm}

In this article, we first extend the result of Alon and Frankl~\cite{AF} as shown in the following.

\begin{thm}\label{main1}
For $n\ge 2s+1$ and $k\ge r\ge 3$, 
$$\ex(n,K_r,\{K_{k+1},M_{s+1}\})=\max\{{N}(T_k(2s+1),K_r),{N}(G_k(n,s),K_r)\}\mbox{.}$$
\end{thm}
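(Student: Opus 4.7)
The plan is to establish both bounds, with the upper bound done by a two-regime analysis of the extremal graph.

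For the lower bound, I would exhibit $T_k(2s+1)\cup \overline{K_{n-2s-1}}$ and $G_k(n,s)$: both are $k$-partite (hence $K_{k+1}$-free) with matching number exactly $s$, and partitioning the $K_r$-copies of $G_k(n,s)$ by whether a vertex from the large independent part is used yields the closed form $N(G_k(n,s),K_r)=N(T_{k-1}(s),K_r)+(n-s)\,N(T_{k-1}(s),K_{r-1})$.

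For the upper bound, let $G$ be extremal. If $G$ has at most $2s+1$ non-isolated vertices, restricting to the non-isolated part and applying Theorem~\ref{GTr} gives $N(G,K_r)\le N(T_k(2s+1),K_r)$; the opening base case $n=2s+1$ also uses the standard fact $N(G_k(2s+1,s),K_r)\le N(T_k(2s+1),K_r)$, since balancing the part sizes of a complete $k$-partite graph maximizes the $K_r$-count. The main task is therefore to show that any $\{K_{k+1},M_{s+1}\}$-free graph $G$ with $\delta(G)\ge 1$ on $n\ge 2s+2$ vertices satisfies $N(G,K_r)\le N(G_k(n,s),K_r)$, which I would prove by induction on $n$. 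For the inductive step ($n\ge 2s+3$), fix a maximum matching $M$ of $G$ with $|M|=m\le s$, set $X=V(M)$ and $I=V(G)\setminus X$; since $|I|\ge n-2s\ge 2$ and no augmenting path exists, for each $uu'\in M$ either one of $N_I(u),N_I(u')$ is empty or $N_I(u)=N_I(u')=\{v\}$ for a single $v\in I$. Summing the contribution of each $M$-edge to $\sum_{v\in I}\deg_G(v)$ yields $\sum_{v\in I}\deg_G(v)\le s|I|$, so some $v\in I$ has $\deg_G(v)\le s$. Such a $v$ can be deleted without producing any new isolated vertex (each neighbor $u\in N(v)\subseteq X$ retains its $M$-partner in $X\setminus\{v\}$), and since $G[N(v)]$ is $K_k$-free, Theorem~\ref{GTr} gives $N(G[N(v)],K_{r-1})\le N(T_{k-1}(s),K_{r-1})$. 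Combining with the inductive hypothesis on $G-v$ telescopes:
\[
N(G,K_r)=N(G-v,K_r)+N(G[N(v)],K_{r-1})\le N(G_k(n-1,s),K_r)+N(T_{k-1}(s),K_{r-1})=N(G_k(n,s),K_r).
\]

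The main obstacle is the base case $n=2s+2$ of this induction, where the telescoping fails because $G-v$ has only $2s+1$ vertices and the inductive hypothesis does not apply. I plan a direct structural analysis. Assuming $m=s$ (the case $m<s$ is analogous), write $I=\{v_1,v_2\}$ and partition $X$ into $A=N(v_1)\setminus N(v_2)$, $B=N(v_2)\setminus N(v_1)$, $C=N(v_1)\cap N(v_2)$, and $R=X\setminus(A\cup B\cup C)$. The no-augmenting-path condition forbids $M$-edges within $C$ and between any two of $A,B,C$; every $M$-edge inside $A$ or $B$ is of singleton-type, and $M$-edges linking $A,B,C$ to $R$ are of the one-sided type. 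Since $G[A\cup C]$ and $G[B\cup C]$ are each $K_k$-free (else $v_i$ together with an internal $K_k$ would give $K_{k+1}$), a careful accounting of $N(G,K_r)=N(G[X],K_r)+N(G[A\cup C],K_{r-1})+N(G[B\cup C],K_{r-1})$ against the $K_{k+1}$-freeness of $G[X]$ and the identity $N(G_k(2s+2,s),K_r)=N(T_{k-1}(s),K_r)+(s+2)\,N(T_{k-1}(s),K_{r-1})$ should yield the required bound. In the threshold configuration where $G-v$ would equal $T_k(2s+1)$ plus an isolated vertex, I rule this out via matching extension: any neighbor $u$ of $v$ in $G$ must lie in the Tur\'an part (an isolated vertex adjacent only to $v$ would force $uv$ into every maximum matching, contradicting $v\in I$), and for $k\ge 3$ the graph $T_k(2s+1)-u$ is a complete $k$-partite graph with all parts of size $\le s$, which admits a perfect matching by Hall's theorem; appending $vu$ gives an $(s+1)$-matching in $G$, a contradiction. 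Executing this structural analysis rigorously is the delicate step I expect to be the main technical obstacle.
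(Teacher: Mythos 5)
Your route is genuinely different from the paper's (which runs Tutte--Berge plus Zykov-type symmetrization on the Berge set $B$ and then a convexity argument in $|B|$ that puts the maximum at the two endpoint configurations $T_k(2s+1)$ and $G_k(n,s)$), and most of it is sound: the lower bound, the reduction to minimum degree $\ge 1$, the observation that a non-augmentable matching forces each $M$-edge to contribute at most $\max\{|I|,2\}$ edges to $I$, and the telescoping inductive step for $n\ge 2s+3$ are all correct. But there is a genuine gap exactly where you flag it: the base case $n=2s+2$ is not proved, and the accounting you sketch for it cannot work as stated. Bounding the three terms $N(G[X],K_r)$, $N(G[A\cup C],K_{r-1})$, $N(G[B\cup C],K_{r-1})$ separately by their individual Tur\'an maxima is hopeless: already the first term alone can exceed the target. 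For $k=r=3$ one has $N(T_3(2s),K_3)\sim \tfrac{8}{27}s^3$ while $N(G_3(2s+2,s),K_3)=(s+2)\lfloor s^2/4\rfloor\sim\tfrac14 s^3$, so $N(T_k(2s),K_r)>N(G_k(2s+2,s),K_r)$ for large $s$. Hence the base case forces you to exploit the interaction between the matching structure of $G[X]$ and the two pendant vertices (a $G[X]$ close to $T_k(2s)$ is incompatible with $\nu(G)\le s$ once $v_1,v_2$ have degree $\ge1$), and this is precisely the hard content of the theorem at the crossover point where $N(T_k(2s+1),K_r)$ and $N(G_k(n,s),K_r)$ compete. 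Your ``threshold configuration'' remark handles only the single graph $T_k(2s+1)$ plus a pendant, not the general $K_{k+1}$-free, matching-number-$s$ graph on $2s+2$ vertices with $\delta\ge1$.

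Note also that the statement you need at $n=2s+2$, namely that every $\{K_{k+1},M_{s+1}\}$-free graph on $2s+2$ vertices with $\delta\ge1$ has at most $N(G_k(2s+2,s),K_r)$ copies of $K_r$, is strictly stronger than what Theorem~\ref{main1} itself asserts at that value of $n$ (where the maximum can be attained by $T_k(2s+1)$ together with an isolated vertex, e.g.\ $k=r=3$, $s=3$ gives $12>10$), so it does not follow from the theorem even granting it for smaller $n$; it requires its own argument. By contrast, the paper avoids any base-case difficulty because its function $f_{n,k,r,s}(b)$ is shown to be convex in $b$ over the whole range $0\le b\le s$ simultaneously for all $n\ge 2s+1$, so both extremal candidates emerge from one computation. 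To repair your proof you would need to carry out the $n=2s+2$ analysis in full, most plausibly by importing something like the paper's symmetrization of the vertices of $X$ or a joint bound on $N(G[X],K_r)+N(G[N(v_1)],K_{r-1})+N(G[N(v_2)],K_{r-1})$ that uses $\nu(G)\le s$ rather than only $K_{k+1}$-freeness. As it stands, the proof is incomplete.
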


We also give a similar result counting the number of stars.

\begin{thm}\label{mainstar}
For $n\ge 2(s+1)(r+1)$ and $r\ge 2$,
$$\ex(n,S_r,\{K_{k+1},M_{s+1}\})={N}(G_k(n,s),S_r)\mbox{.}$$
\end{thm}








\section{Preliminaries}



We need the following fundamental theorem in graph theory.

\begin{thm}[Tutte-Berge Theorem~\cite{Tutte-Berge1}, see also \cite{Tutte-Berge2}]\label{Tutte-Berge}
A graph $G$ is $M_{s+1}$-free if and only if there is a set $B\subset V(G)$ such that  all the components $G_1,\ldots, G_m$ of $G-B$ are odd (i.e. $|V(G_i)|\equiv1\pmod2$ for $i\in[m]$), and 
$$|B|+\sum_{i=1}^{m}\frac{|V(G_i)|-1}{2}\le s\mbox{.}$$
\end{thm}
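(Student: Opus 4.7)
The plan is to prove the two directions separately: sufficiency is a direct counting bound on matchings, while the necessity direction is the substance of the theorem, which I would derive from Tutte's 1-factor theorem via a padding construction.

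For sufficiency, take any matching $M$ in $G$ and, for each odd component $G_i$ of $G-B$, let $b_i$ (respectively $c_i$) denote the number of $M$-edges with exactly one endpoint in $G_i$ (resp.\ both endpoints in $G_i$). Because $|V(G_i)|$ is odd we have $b_i+2c_i\le|V(G_i)|-1$ whenever $b_i$ is even, and the number of components with $b_i$ odd is at most $\sum_i b_i\le|B|$ since each vertex of $B$ lies in at most one $M$-edge. Adding these bounds produces $2|M|\le|V(G)|+|B|-m$, which by the hypothesis $|B|+\sum_i(|V(G_i)|-1)/2=s$ equals $2s$, so $|M|\le s$.

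For necessity, I treat the principal case $\nu(G)=s$ first; the case $\nu(G)<s$ is reduced to it afterwards by absorbing components of $G-B$ into $B$ so as to raise the sum to $s$, which is feasible in the regime $|V(G)|\ge 2s+1$ used throughout the paper. Form the auxiliary graph $G^{\ast}$ by adjoining $k:=|V(G)|-2s$ new vertices, each adjacent to every other vertex of $G^{\ast}$. Pairing the $k$ unmatched vertices of a maximum matching in $G$ with these new vertices shows $\nu(G^{\ast})=s+k=|V(G^{\ast})|/2$, so by Tutte's theorem $G^{\ast}$ has a perfect matching, equivalently $o(G^{\ast}-S^{\ast})\le|S^{\ast}|$ for every $S^{\ast}\subseteq V(G^{\ast})$. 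Taking $S^{\ast}=S\cup\{\text{all new vertices}\}$ yields $o(G-S)-|S|\le k$ for every $S\subseteq V(G)$, and a $B\subseteq V(G)$ attaining equality exists by a standard deficiency argument. If a component of $G-B$ is still even, pick a non-cut vertex $v$ of it and replace $B$ by $B\cup\{v\}$: the even component is replaced by a single connected odd component, so $o(G-B)-|B|$ is unchanged while the number of even components strictly decreases. Iterating removes all even components. The identities $|V(G)|=|B|+\sum_i|V(G_i)|$ and $o(G-B)=m=|V(G)|+|B|-2s$ then rearrange to $|B|+\sum_i(|V(G_i)|-1)/2=s$.

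The main obstacle I anticipate is the even-component absorption step: one must verify that a non-cut vertex always exists (immediate, since any connected graph on at least two vertices has at least two non-cut vertices) and that the absorption preserves both the deficiency equality $o(G-B)-|B|=k$ and the oddness of the remaining components. Once this structural tidying is in place, the rest of the proof reduces to Tutte's theorem and elementary counting.
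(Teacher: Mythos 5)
The paper does not actually prove this statement: it is the classical Tutte--Berge theorem, quoted from Berge and from Lov\'asz--Plummer, so there is no internal proof to compare yours against. Your route --- deduce the formula from Tutte's $1$-factor theorem by adjoining $k=|V(G)|-2s$ universal vertices, then clean up even components and pad the certificate up to $s$ --- is the standard textbook derivation, and the peripheral parts check out: the counting in the sufficiency direction does give $2|M|\le |V(G)|+|B|-m=2s$, and absorbing a non-cut vertex of an even component preserves $o(G-B)-|B|$ while strictly reducing the number of even components.

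The one genuine gap is the clause ``a $B\subseteq V(G)$ attaining equality exists by a standard deficiency argument.'' What you actually extract from Tutte's theorem applied to $G^{\ast}$ is the inequality $o(G-S)-|S|\le k$ for every $S$, which is the \emph{easy} direction (it merely re-certifies $\nu(G)\ge s$); the existence of a set attaining equality is precisely the nontrivial content of Tutte--Berge, and it does not follow from anything you wrote --- a priori the maximum of $o(G-S)-|S|$ could be smaller than $k$. To close this you need the parity argument: $o(G-S)-|S|\equiv|V(G)|\pmod 2$ for every $S$, so if no $S$ attains $k$ then $o(G-S)-|S|\le k-2$ for all $S$; then $G$ joined to $k-2$ universal vertices satisfies Tutte's condition, and its perfect matching restricts to a matching of $G$ of size at least $(|V(G)|-k+2)/2=s+1$, contradicting $\nu(G)=s$. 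A second, smaller flaw is the reduction from $\nu(G)<s$: absorbing a whole odd component of order $a$ into $B$ raises the quantity $|B|+\sum_i(|V(G_i)|-1)/2$ by $(a+1)/2$, which can overshoot $s$. You should instead raise it in unit steps, either by absorbing a singleton component or by moving two suitable vertices (e.g.\ two leaves of a spanning tree) of a larger component into $B$; this is always possible while the quantity is below $s$, since then $|B|<s$ and hence $|A|=|V(G)|-|B|>|V(G)|-s\ge s+1>0$.
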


Let $G$ be a graph and $r$ be a positive integer. For a vertex $v\in V(G)$, define $N_{G}^{(r)}(v)=\{U\in \binom{V}{r}: G[U\cup\{v\}]\cong K_{r+1}\}$ be the {\em $r$-clique neighborhood} of $v$ and $d_{G}^{(r)}(v)=|N_{G}^{(r)}(v)|$ be the {\em $r$-clique-degree} of $v$. For a set $U\subset V(G)$, write $d_G^{(r)}(U)=\sum_{u\in U}d_G^{(r)}(u)$. As usual, write neighborhood $N_{G}(v)$ and degree $d_{G}(v)$ instead of $1$-clique neighborhood $N_{G}^{(1)}(v)$ and $1$-clique-degree $d_{G}^{(1)}(v)$ for short. 

For two non-adjacent vertices $u,v$ in a graph $G$, we  define the {\em switching operation $u\rightarrow v$} as deleting the edges joining $u$ to its neighbors  and adding new edges connecting $u$ to vertices in $N_G(v)$. Let  $G_{u\rightarrow v}$ be the graph
obtained from $G$ by the switching operation $u\rightarrow v$, that is $V(G_{u\rightarrow v})=V(G)$ and
$$E(G_{u\rightarrow v})=\left(E(G)\setminus E(G[\{u\}, N_G(u)])\right)\cup E(G[\{u\}, N_G(v)]).$$
Note that the edges between  $u$ and the common neighbors of $u$ and $v$ remain unchanged by the definition of $G_{u\rightarrow v}$.
For two disjoint independent sets $S$ and $T$ in a graph $G$, if all of the vertices in $S$ (resp. $T$) have the same neighborhood $N_G(S)$ (resp. $N_G(T)$) and $E(G[S, T])=\emptyset$, we then call $\{S,T\}$ an {\it independent pair}.   
For any independent pair $\{S,T\}$, we similarly define $G_{S\rightarrow T}$ to be the graph obtained from $G$ by deleting the edges between $S$  and $N_G(S)$ and adding new edges connecting $S$ and $N_G(T)$. 

For any fixed graph $H$, we say $H$ is {\it switchable} if for any graph $G$ and any independent pair $\{S,T\}$ in $G$, either $G'=G_{S\rightarrow T}$ or $G'=G_{T\rightarrow S}$ has the property that $N(G', H)\ge N(G, H)$.

\begin{prop}\label{PROP:p1}

For $r\ge 2$, $K_r$ and $S_r$ are both switchable.
\end{prop}
\begin{proof}
Let $\{S,T\}$ be an independent pair of $G$.

We firstly prove for $K_r$.
Without loss of generality, suppose $d_{G}^{(r-1)}(T)\ge d_{G}^{(r-1)}(S)$. Let $G'=G_{S\rightarrow T}$.
Then
$${N}(G',K_r)={N}(G,K_r)-d_{G}^{(r-1)}(S)+d_{G}^{(r-1)}(T)\ge N(G,K_r)\mbox{,}$$	
the equality holds if and only if $d_{G}^{(r-1)}(T)=d_{G}^{(r-1)}(S)$.

For $S_r$, note that ${N}(G,S_r)=\sum_{v\in G}\binom{d_G(v)}{r}$. Let $X=N(G_{S\rightarrow T},S_r)-N(G,S_r)$ and $Y=N(G_{T\rightarrow S},S_r)-N(G,S_r)$, then
\begin{eqnarray*}
X&=&|S|\left(\binom{|N_{G}(T)|}{r}-\binom{|N_{G}(S)|}{r}\right)\\
&+&\sum_{u\in N_{G}(S)\backslash N_{G}(T)}\left(\binom{d_{G}(u)-|S|}{r}-\binom{d_{G}(u)}{r}\right)\\
&+&\sum_{w\in N_{G}(T)\backslash N_{G}(S)}\left(\binom{d_{G}(w)+|S|}{r}-\binom{d_{G}(w)}{r}\right),
\end{eqnarray*}
and 
\begin{eqnarray*}
Y&=&|T|\left(\binom{|N_{G}(S)|}{r}-\binom{|N_{G}(T)|}{r}\right)\\
&+&\sum_{u\in N_{G}(S)\backslash N_{G}(T)}\left(\binom{d_{G}(u)+|T|}{r}-\binom{d_{G}(u)}{r}\right)\\
&+&\sum_{w\in N_{G}(T)\backslash N_{G}(S)}\left(\binom{d_{G}(w)-|T|}{r}-\binom{d_{G}(w)}{r}\right).
\end{eqnarray*}
Therefore, 
\begin{eqnarray*}
|T|X+|S|Y&=&\sum_{u\in N_{G}(S)\backslash N_{G}(T)}\left(|S|\binom{d_{G}(u)+|T|}{r}+|T|\binom{d_{G}(u)-|S|}{r}\right)\\
&-&\sum_{u\in N_{G}(S)\backslash N_{G}(T)}(|S|+|T|)\binom{d_{G}(u)}{r}\\
&+&\sum_{w\in N_{G}(T)\backslash N_{G}(S)}\left(|T|\binom{d_{G}(w)+|S|}{r}+|S|\binom{d_{G}(w)-|T|}{r}\right)\\
&-&\sum_{w\in N_{G}(T)\backslash N_{G}(S)}(|S|+|T|)\binom{d_{G}(w)}{r}.
\end{eqnarray*}
Now since $\binom{x}{r}$ is a convex function of $x$, by Jensen's inequation, $|T|X+|S|Y\ge 0$. This means either $X\ge 0$ or $Y\ge 0$, which completes the proof.
\end{proof}

Let $G$ be a graph and $u_1$, $u_2$, $v_1$, $v_2$ be distinct vertices in $G$. Define $G_{(u_1,v_1)\rightarrow(u_2,v_2)}=(G_{u_1\rightarrow u_2})_{v_1\rightarrow v_2}=(G_{v_1\rightarrow v_2})_{u_1\rightarrow u_2}$.
\begin{prop}\label{PROP:p+}
Let $G$ be a graph and $u_1$, $u_2$, $v_1$, $v_2$ be distinct vertices in $G$. Suppose $u_1u_2, v_1v_2, u_1v_2, v_1u_2\notin E(G)$. Then either $G'=G_{(u_1,v_1)\rightarrow(u_2,v_2)}$ or $G'=G_{(u_2,v_2)\rightarrow(u_1,v_1)}$ has $N(G',S_r)\ge N(G,S_r)$ for $r\ge 2$.
\end{prop}
\begin{proof}
For any two vertices $x$ and $y$ in $G$, let $\varepsilon_{xy}=1$ if $xy\in E(G)$ and  $\varepsilon_{xy}=0$ otherwise.
For $i=1,2$, Let $\varepsilon_i=\varepsilon_{u_iv_i}$. Also, for any vertex $v\in V(G)$, let $\varepsilon_v=\varepsilon_{vu_2}+\varepsilon_{vv_2}-\varepsilon_{vu_1}-\varepsilon_{vv_1}$.
Let $X=N(G_{(u_1,v_1)\rightarrow(u_2,v_2)},S_r)-N(G,S_r)$ and $Y=N(G_{(u_2,v_2)\rightarrow(u_1,v_1)},S_r)-N(G,S_r)$.
Similarly as the proof in Proposition~\ref{PROP:p1}, we see that
\begin{eqnarray*}
X&=&2\left(\binom{d_{G}(u_2)+\varepsilon_2}{r}+\binom{d_{G}(v_2)+\varepsilon_2}{r}\right)\\
&-&\left(\binom{d_{G}(u_1)}{r}+\binom{d_{G}(v_1)}{r}+\binom{d_{G}(u_2)}{r}+\binom{d_{G}(v_2)}{r}\right)\\
&+&\sum_{v\neq u_1,u_2,v_1,v_2}\left(\binom{d_{G}(v)+\varepsilon_v}{r}-\binom{d_{G}(v)}{r}\right),
\end{eqnarray*}
and
\begin{eqnarray*}
Y&=&2\left(\binom{d_{G}(u_1)+\varepsilon_1}{r}+\binom{d_{G}(v_1)+\varepsilon_1}{r}\right)\\
&-&\left(\binom{d_{G}(u_1)}{r}+\binom{d_{G}(v_1)}{r}+\binom{d_{G}(u_2)}{r}+\binom{d_{G}(v_2)}{r}\right)\\
&+&\sum_{v\neq u_1,u_2,v_1,v_2}\left(\binom{d_{G}(v)-\varepsilon_v}{r}-\binom{d_{G}(v)}{r}\right).
\end{eqnarray*}
Therefore,
\begin{eqnarray*}
X+Y&=&2\left(\binom{d_{G}(u_1)+\varepsilon_1}{r}+\binom{d_{G}(v_1)+\varepsilon_1}{r}+\binom{d_{G}(u_2)+\varepsilon_2}{r}+\binom{d_{G}(v_2)+\varepsilon_2}{r}\right)\\
&-&2\left(\binom{d_{G}(u_1)}{r}+\binom{d_{G}(v_1)}{r}+\binom{d_{G}(u_2)}{r}+\binom{d_{G}(v_2)}{r}\right)\\
&+&\sum_{v\neq u_1,u_2,v_1,v_2}\left(\binom{d_{G}(v)-\varepsilon_v}{r}+\binom{d_{G}(v)+\varepsilon_v}{r}-2\binom{d_{G}(v)}{r}\right).
\end{eqnarray*}
Note that $\varepsilon_1,\varepsilon_2\ge 0$. Now since $\binom{x}{r}$ is a convex function of $x$, by Jensen's inequation, $X+Y\ge 0$. This means either $X\ge 0$ or $Y\ge 0$, which completes the proof.
\end{proof}

We give the following observation about combination numbers without proof.
\begin{obs}\label{bin}
For integers $n> m\ge r\ge 2$, 
$$1-\frac{r(n-m)}{n-r+1}<\left(1-\frac{n-m}{n-r+1}\right)^{r}=\left(\frac{m-r+1}{n-r+1}\right)^r<\frac{\binom{m}{r}}{\binom{n}{r}}<\left(\frac{m}{n}\right)^r.$$ 
\end{obs}

\begin{prop}\label{calc}
Let $r\ge 2$, $s\ge 1$, $k\ge 2$, $0\le b\le s$ and $n\ge 2(s+1)(r+1)$ be integers. Let $h_{n,k}(x_1,\cdots, x_{k-1})=\sum_{i=1}^{k-1}x_i\binom{n-x_i}{r}$.\\
(1) Let
$$g_{n,k,r,b,y}(x_1,\cdots, x_k):=h_{n,k}(x_1,\cdots, x_{k-1})+x_k\binom{y-x_k}{r}$$
be a function on $\{(x_1,\cdots, x_k)\in \mathbb{Z}^k: x_1+\cdots+x_k=b\mbox{, }x_i\ge 0 \mbox{ for any } i\in[k]\}$, where $s\le y\le 2s$ is an integer. Then $g_{n,k,r,b,y}(x_1,\cdots, x_k)$ reaches its maximum when $x_k=0$ and $|x_i-x_j|\le 1$ for any $i,j\in[k-1]$.\\
(2) Let $x=\sum_{i=1}^{k-1}x_i$ and
$$g_{n,k,r,s}(x_1,\cdots, x_{k-1}):=h_{n,k}(x_1,\cdots,x_{k-1})+(2s+1-2x)\binom{2s-x}{r}+(n-1-2s+x)\binom{x}{r}$$
be a function on $\{(x_1,\cdots, x_{k-1})\in \mathbb{Z}^{k-1}: x\le s\mbox{, }x_i\ge 0 \mbox{ for any } i\in[k-1]\}$. Then the function $g_{n,k,r,s}(x_1,\cdots, x_{k-1})$ reaches its maximum when $x=s$ and $|x_i-x_j|\le 1$ for any $i,j\in[k-1]$.\\
(3) $N(G_k(n,s),S_r)$ is an increasing function of $s$.
\end{prop}
\begin{proof}
before the proof of (1), (2) and (3), we firstly prove the following claim.
\begin{claim}\label{fturan}
On $\{(x_1,\cdots, x_{k-1})\in \mathbb{Z}^{k-1}: x_1+\cdots+x_{k-1}=b\mbox{, }x_i\ge 0 \mbox{ for any } i\in[k-1]\}$, $h_{n,k}(x_1,\cdots, x_{k-1})$ reaches its maximum when $|x_i-x_j|\le 1$ for any $i,j\in[k-1]$.
\end{claim}
\begin{proof}
If $k=2$, then we are done.
For $k\ge 3$, without loss of generality, suppose on the contrary that $h_{n,k}$ reaches its maximum on $(b_1,\cdots, b_{k-1})$ while $b_1-b_2\ge 2$. Let $X=h_{n,k}(b_1-1, b_2+1, b_3,\cdots, b_{k-1})-h_{n,k}(b_1,b_2,b_3,\cdots,b_{k-1})$, then
\begin{eqnarray*}
X&=&(b_1-1)\binom{n-b_1+1}{r}+(b_2+1)\binom{n-b_2-1}{r}-b_1\binom{n-b_1}{r}-b_2\binom{n-b_2}{r}\\
&=&\left(\binom{n-b_2-1}{r}-\binom{n-b_1+1}{r}\right)+\left(b_1\binom{n-b_1}{r-1}-b_2\binom{n-b_2-1}{r-1}\right)\\
&=&\left(\frac{n-b_2-r}{r}\binom{n-b_2-1}{r-1}-\frac{n-b_1+1}{r}\binom{n-b_1}{r-1}\right)+\left(b_1\binom{n-b_1}{r-1}-b_2\binom{n-b_2-1}{r-1}\right)\\
&=&\frac{n-(r+1)(b_2+1)+1}{r}\binom{n-b_2-1}{r-1}-\frac{n-(r+1)b_1+1}{r}\binom{n-b_1}{r-1}\\
&>& 0\mbox{,}
\end{eqnarray*}
where the last inequality holds because $n-(r+1)(b_2+1)+1>n-(r+1)b_1+1>0$ and $n-b_2-1>n-b_1\ge r-1$. This is a contradiction by the maximality of $h_{n,k}(b_1\cdots,b_{k-1})$.
\end{proof}
By the proof of the Claim~\ref{fturan}, it is easy to see that (3) is correct since $N(G_k(n,s),S_r)=h_{n,k+1}(x_1,\cdots, x_{k-1}, n-s)$ with $\sum_{i=1}^{k-1}x_i=s$, $|x_i-x_j|\le 1$ for any $i,j\in[k-1]$ and so $|(n-s)-x_i|>1$ for $i\in[k-1]$. It remains to prove (1) and (2).

(1) By Claim~\ref{fturan}, it remains to prove $x_k=0$ when $g_{n,k,r,b,y}$ reaches its maximum. Suppose on the contrary that $g_{n,k,r,b,y}$ reaches its maximum on $(b_1, \cdots, b_k)$ while $b_k\ge 1$. Let $X=g_{n,k,r,b,y}(b_1+b_k,b_2, \cdots, b_{k-1},0)-g_{n,k,r,b,y}(b_1,b_2,\cdots, b_{k-1}, b_{k})$, then
\begin{eqnarray*}
X&=& (b_1+b_k)\binom{n-b_1-b_k}{r}-\left(b_1\binom{n-b_1}{r}+b_k\binom{y-b_k}{r}\right)\\
&>&(b_1+b_k)\left(1-\frac{rb_k}{n-b_1-r+1}\right)\binom{n-b_1}{r}-\left(b_1\binom{n-b_1}{r}+b_k\binom{y-b_k}{r}\right)\\
&=&b_k\left(\left(1-\frac{r(b_1+b_k)}{n-b_1-r+1}\right)\binom{n-b_1}{r}-\binom{y-b_k}{r}\right)\\
&>&b_k\binom{y-b_k}{r}\left(\left(1-\frac{r(b_1+b_k)}{n-b_1-r+1}\right)\left(\frac{n-b_1}{y-b_k}\right)^r-1\right)\\
&\ge&b_k\binom{y-b_k}{r}\left(\left(1-\frac{sr}{2sr}\right)\left(\frac{2sr}{2s}\right)^r-1\right)\\
&\ge&0\mbox{,}
\end{eqnarray*}
where the first two inequality holds by Observation~\ref{bin} and the last holds since $b_1+b_k\le s$, $n-b_1-r+1> 2sr$, $n-b_1>2sr$ and $y-b_k<2s$. This is a contradiction by the maximality of $g_{n,k,r,b,y}(b_1\cdots,b_{k})$.

(2) Similarly, we only need to prove $x=s$ when $g_{n,k,r,s}$ reaches its maximum. Suppose on the contrary that $g_{n,k,r,s}$ reaches its maximum on $(b_1, \cdots, b_{k-1})$ while $x=b_1+\cdots+b_{k-1}\le s-1$. Note that $p(x)=(n-1-2s+x)\binom{x}{r}$ is an increasing function on $x\in[s]$. Let $X=g_{n,k,r,s}(b_1+(s-x),b_2, \cdots, b_{k-1})-g_{n,k,r,s}(b_1,b_2,\cdots, b_{k-1})$, then
\begin{eqnarray*}
X&=&(b_1+(s-x))\binom{n-b_1-(s-x)}{r}-b_1\binom{n-b_1}{r}-(2s+1-2x)\binom{2s-x}{r}\\
&+&\binom{s}{r}+p(s)-p(x)\\
&>&(b_1+(s-x))\binom{n-b_1-(s-x)}{r}-b_1\binom{n-b_1}{r}-(2s+1-2x)\binom{2s-x}{r}\\
&>&(b_1+(s-x))\left(1-\frac{r(s-x)}{n-b_1-r+1}\right)-b_1\binom{n-b_1}{r}-(2s+1-2x)\binom{2s-x}{r}\\
&=&(s-x)\left(1-\frac{r(b_1+(s-x))}{n-b_1-r+1}\right)\binom{n-b_1}{r}-(2s+1-2x)\binom{2s-x}{r}\\
&>&\left((s-x)\left(1-\frac{r(b_1+(s-x))}{n-b_1-r+1}\right)\left(\frac{n-b_1}{2s-x}\right)^r-(2s+1-2x)\right)\binom{2s-x}{r}\\
&>&\left((s-x)\left(\left(1-\frac{sr}{2sr}\right)\left(\frac{2s(r+\frac{1}{2})}{2s}\right)^r-2\right)-1\right)\binom{2s-x}{r}\\
&>&0\mbox{,}
\end{eqnarray*}
where the second inequality and the third inequality hold by Observation~\ref{bin}; the forth inequality holds since $r(b_1+(s-x))\le sr$, $n-b_1-r+1>2sr$, $n-b_1>2s(r+\frac{1}{2})$ and $2s-x\le 2s$. This is a contradiction by the maximality of $g_{n,k,r,s}(b_1\cdots,b_{k-1})$.
\end{proof}

Let $\Delta_{t,k}^r={N}(T_{k}(t),K_{r})$ for some positive integers $t, k, r$. 

\begin{obs}\label{calculation}
	(1) For positive integers $t, k, r\ge 2$, 
	$$\Delta_{t+1,k}^r-\Delta_{t,k}^r=\Delta_{t-\lfloor\frac{t}{k}\rfloor,k-1}^{r-1} \mbox{ and } \Delta_{t,k}^r=\Delta_{t-\lfloor\frac{t}{k}\rfloor,k-1}^{r}+\left\lfloor\frac{t}{k}\right\rfloor\Delta_{t-\left\lfloor\frac{t}{k}\right\rfloor,k-1}^{r-1}\mbox{.}$$
	(2) For $n\ge 2s+1$ and $s\ge t, k, r\ge 3$,  define 
	$$g_{n,k,r}(t):=(n-t)\Delta_{t,k-1}^{r-1}+\Delta_{t,k-1}^r\mbox{.}$$
Then $g_{n,k,r}(t)$ is a strictly increasing function of $t$. In particular, $g_{n,k,r}(s)={N}(G_k(n,s),K_r)$.
\end{obs}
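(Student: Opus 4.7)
The plan is to prove both parts by direct combinatorial counting of cliques in Turán graphs, using the structure that $T_k(t)$ is $k$-partite with partition sizes $\lceil t/k\rceil$ and $\lfloor t/k\rfloor$.

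For part~(1), I would establish the two recurrences as follows. To see the first identity, view $T_k(t+1)$ as $T_k(t)$ together with a new vertex $v$ added to a class of size $\lfloor t/k\rfloor$. Then $v$ is adjacent to all vertices outside this class, and the subgraph induced by the remaining $k-1$ classes is exactly $T_{k-1}(t-\lfloor t/k\rfloor)$ (since the remaining part sizes already differ by at most one). Any $r$-clique newly created by adding $v$ is of the form $\{v\}\cup U$ where $U$ is an $(r-1)$-clique in this $T_{k-1}(t-\lfloor t/k\rfloor)$, which yields exactly $\Delta_{t-\lfloor t/k\rfloor,k-1}^{r-1}$ new cliques. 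For the second identity, fix any partition class $A$ of size $\lfloor t/k\rfloor$ in $T_k(t)$; since $A$ is independent, each $r$-clique either avoids $A$ (giving $\Delta_{t-\lfloor t/k\rfloor,k-1}^r$ cliques in the remaining $T_{k-1}(t-\lfloor t/k\rfloor)$) or meets $A$ in exactly one vertex (giving $\lfloor t/k\rfloor\cdot\Delta_{t-\lfloor t/k\rfloor,k-1}^{r-1}$ cliques).

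For the identity $g_{n,k,r}(s)=N(G_k(n,s),K_r)$, simply count $r$-cliques in $G_k(n,s)=\overline{K_{n-s}}\vee T_{k-1}(s)$: every $r$-clique uses at most one vertex from the independent side, so it either lies entirely in $T_{k-1}(s)$ (contributing $\Delta_{s,k-1}^r$) or consists of one vertex from $\overline{K_{n-s}}$ joined to an $(r-1)$-clique in $T_{k-1}(s)$ (contributing $(n-s)\Delta_{s,k-1}^{r-1}$).

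The strict monotonicity of $g_{n,k,r}$ is the heart of the observation. I would compute the discrete difference
\[
g_{n,k,r}(t+1)-g_{n,k,r}(t)=(n-t-1)\bigl(\Delta_{t+1,k-1}^{r-1}-\Delta_{t,k-1}^{r-1}\bigr)-\Delta_{t,k-1}^{r-1}+\bigl(\Delta_{t+1,k-1}^r-\Delta_{t,k-1}^r\bigr),
\]
apply the first identity of part~(1) with parameters $(t,k-1)$ to both bracketed terms, and then substitute the second identity of part~(1) for $\Delta_{t,k-1}^{r-1}$. After cancellation this collapses to
\[
g_{n,k,r}(t+1)-g_{n,k,r}(t)=\bigl(n-t-1-\lfloor t/(k-1)\rfloor\bigr)\,\Delta_{t-\lfloor t/(k-1)\rfloor,\,k-2}^{r-2}.
\]
To finish, I would check both factors are positive: the hypothesis $n\ge 2s+1$ and $t\le s$ give $n-t-1\ge s\ge t$, while $k-1\ge 2$ implies $\lfloor t/(k-1)\rfloor\le t/2<t$, so the first factor is positive; and $t-\lfloor t/(k-1)\rfloor\ge t(k-2)/(k-1)\ge k-2\ge r-2$ shows that the Turán graph $T_{k-2}(t-\lfloor t/(k-1)\rfloor)$ is large enough to contain a $K_{r-2}$, so the second factor is a positive integer.

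There is no real obstacle here; the only subtlety is getting the right algebraic rearrangement so that the two recurrences from part~(1) can be applied in tandem. The bound $n\ge 2s+1$ is used precisely to guarantee the strict positivity of $n-t-1-\lfloor t/(k-1)\rfloor$ for all $t\le s$, so one should record that hypothesis explicitly when invoking the result later.
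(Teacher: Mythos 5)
Your proposal is correct and follows essentially the same route as the paper: part (1) by the standard vertex-addition and class-deletion counting in Tur\'an graphs (which the paper dispatches as ``can be checked directly''), and part (2) by exactly the same telescoping of the discrete difference $g_{n,k,r}(t+1)-g_{n,k,r}(t)$ into $\bigl(n-1-t-\lfloor t/(k-1)\rfloor\bigr)\Delta_{t-\lfloor t/(k-1)\rfloor,k-2}^{r-2}$ via the two recurrences of part (1). Your explicit verification that both factors are strictly positive is a welcome addition the paper omits, but it does not change the argument.
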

\begin{proof}
(1) can be checked directly by the definitions of $\Delta_{t,k}^r$ and the Tur\'an graph.
	
(2)	By (1), for $t\le s-1$,
	\begin{eqnarray*}
		g_{n,k,r}(t+1)-g_{n,k,r}(t)&=&(n-t-1)\Delta_{t-\lfloor\frac{t}{k-1}\rfloor,k-2}^{r-2}-\Delta_{t,k-1}^{r-1}+\Delta_{t-\lfloor\frac{t}{k-1}\rfloor,k-2}^{r-1}\\
		&=&(n-t-1)\Delta_{t-\lfloor\frac{t}{k-1}\rfloor,k-2}^{r-2}-\left\lfloor\frac{t}{k-1}\right\rfloor\Delta_{t-\lfloor\frac{t}{k-1}\rfloor,k-2}^{r-2}\\
		&=&\left(n-1-t-\left\lfloor\frac{t}{k-1}\right\rfloor\right)\Delta_{t-\lfloor\frac{t}{k-1}\rfloor,k-2}^{r-2}>0\mbox{.}
	\end{eqnarray*}
	This completes the proof.
\end{proof}

\section{Proof of Theorem~\ref{main1} and Theorem~\ref{mainstar}}

Now we are ready to give the proofs of Theorem~\ref{main1} and Theorem~\ref{mainstar}. 

	Let $G$ be an extremal graph of  $\{K_{k+1},M_{s+1}\}$ (with the maximum number of $K_r$ (resp. $S_r$) in it) on $n\ge 2s+1$ vertices.
By Theorem~\ref{Tutte-Berge}, there is a vertex set $B\subset V(G)$ such that  $G-B$ consists of odd components $G_1,\ldots, G_m$, and 
$$|B|+\sum_{i=1}^{m}\frac{|V(G_i)|-1}{2}\le s\mbox{.}$$
Let $A_i=V(G_i)$ and $|A_i|=a_i$ for $i\in[m]$. Denote $A=\cup_{i=1}^mA_i$. Let $I_G(A)=\{ i\in[m] : a_i=1\}$. We may choose $G$ maximizing $|I_G(A)|$ (assumption (*)). Let $|B|=b$. 

Define two vertices $u$ and $v$ in  $B$ are equivalent if and only if $N_G(u)=N_G(v)$. Clearly, it is an equivalent relation. Therefore, the vertices of $B$ can be partitioned into equivalent classes according to the equivalent relation defined above.  We may choose $G$ (among graphs $G$ satisfying assumption (*)) with the minimum number of equivalent classes of $B$ (assumption (**)).  
Note that each equivalent class of $B$ is an independent set of $G$ by the definition of the equivalent relation. 
 We firstly claim that every two non-adjacent vertices of $B$ have the same neighborhood (a clique version of Lemma 2.1 of~\cite{AF}),  which is also a simple consequence of the Zykov symmetrization method introduced in~\cite{Zykov}, for completeness we include the proof.

\begin{lem}\label{B}
Every two non-adjacent vertices of $B$ have the same neighborhood.
\end{lem}
\begin{proof}
Suppose there are two non-adjacent vertices $u, w\in B$ with $N_{G}(u)\neq N_{G}(w)$. Then $u$ and $w$ must be in distinct equivalent classes $U$ and $W$ by the definition of the equivalence. Since $uw\notin E(G)$, we have $E(G[U, W])=\emptyset$. Thus, $\{U,W\}$ is an independent pair. Without loss of generality, let $G'=G_{U\rightarrow W}$. By Proposition~\ref{PROP:p1}, we can suppose $N(G', K_r)\ge N(G, K_r)$ (resp. $N(G', S_r)\ge N(G, S_r)$). 
Now we show that $G'$ is $\{K_{k+1}, M_{s+1}\}$-free too. Clearly,  $G'-B$ still consists of odd components $G_1,\ldots, G_m$. 
Hence $G'$ is $M_{s+1}$-free by  Theorem~\ref{Tutte-Berge}. 
If $G'$ contains a copy $T$ of $K_{k+1}$, we must have a vertex $u\in V(T)\cap U$. Choose a vertex $w\in W$. Since $N_{G'}(u)=N_{G'}(w)=N_G(w)$,  $(V(T)\setminus\{u\})\cup\{w\}$ induces a copy of $K_{k+1}$ in $G$, a contradiction. Hence, $G_{U\rightarrow W}$ is $\{K_{k+1}, M_{s+1}\}$-free. 
By the extremality of $G$, we have $N(G', K_r)= N(G, K_r)$ (resp. $N(G', S_r)= N(G, S_r)$). 
 But the number of equivalent classes of $G'$ ($U$ and $W$  merge into one class in $G'$) is less than the one in $G$, a contradiction to assumption (**).
\end{proof}

In fact, with similar proofs ,we also have the following two lemmas.
\begin{lem}\label{Ai}
For any fixed $i\in [m]$, every two non-adjacent vertices of $A_i$ have the same neighborhood.
\end{lem}
\begin{lem}\label{Ai1}
For  $i,j\in [m]$ with $a_i=a_j=1$, the only vertex $v_i\in A_i$ and the only vertex $v_j\in A_j$ have the same neighborhood.
\end{lem}


Here we firstly give the proof of Theorem~\ref{main1}.

\begin{proof}[Proof of Theorem~\ref{main1}:]
Let $G$ be the extremal graph mentioned above  with the maximum number of $K_r$ .
By Lemma~\ref{B} and $G$ being $K_{k+1}$-free, $G[B]$ is a complete $\ell$-partite graph with $\ell\le k$. Let its partition sets be $B_1, \ldots, B_\ell$ and let $B_{\ell+1}=\cdots=B_k=\emptyset$ if $\ell<k$. Let $b_i=|B_i|$ for $i\in[k]$. Without loss of generality, assume $b_1\ge b_2\ge\ldots\ge b_k\ge 0$. Write $\mathcal{B}=\{B_1,\ldots,B_{k-1}\}$. Let $\Delta_{\mathcal{B}}^{r-1}={N}(K[\mathcal{B}],K_{r-1})$.
 Since $\sum_{i=1}^{k-1}b_i=b-b_k$, by Theorem~\ref{GTr}, $\Delta_{\mathcal{B}}^{r-1}\le\Delta_{b-b_k,k-1}^{r-1}$.



Recall that $A=\cup_{i=1}^m A_i$. For those isolated vertices in $G[A]$, we have the following claim.
\begin{claim}\label{independent}
For $v\in A$ with $d_{G[A]}(v)=0$, we have $d_{G}^{(r-1)}(v)\le \Delta_{\mathcal{B}}^{r-1}\le\Delta_{b-b_k,k-1}^{r-1}$.
\end{claim}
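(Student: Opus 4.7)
The plan is to bound $d_G^{(r-1)}(v)$ by analyzing how $N_G(v)$ sits inside the multipartite graph $G[B]$. Since $d_{G[A]}(v)=0$, the vertex $v$ has no neighbors in $A$, so every neighbor of $v$ lies in $B$, and consequently
\[
d_G^{(r-1)}(v)={N}(G[N_G(v)],K_{r-1}).
\]
As $G[N_G(v)]$ is an induced subgraph of the complete $\ell$-partite graph $G[B]$, it is itself complete multipartite with parts $N_G(v)\cap B_i$ of sizes $b'_i:=|N_G(v)\cap B_i|\le b_i$.

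Next I would use $K_{k+1}$-freeness to bound the number of nonempty parts of $G[N_G(v)]$. If $v$ had a neighbor in each of the $\ell$ nonempty parts $B_1,\ldots,B_\ell$, then picking one such vertex per part together with $v$ would produce a copy of $K_{\ell+1}$ in $G$, forcing $\ell+1\le k$. Combined with the a priori bound $\ell\le k$, this shows that the index set $J:=\{i\in[\ell]:N_G(v)\cap B_i\ne\emptyset\}$ always satisfies $|J|\le k-1$.

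Then comes the counting step. Using $b'_i\le b_i$, I would write
\[
d_G^{(r-1)}(v)=\sum_{\substack{S\subseteq J\\|S|=r-1}}\prod_{i\in S}b'_i\le\sum_{\substack{S\subseteq J\\|S|=r-1}}\prod_{i\in S}b_i,
\]
and the remaining task is to show this last sum is at most $\Delta_{\mathcal{B}}^{r-1}=\sum_{S\subseteq[k-1],\,|S|=r-1}\prod_{i\in S}b_i$. If $J\subseteq[k-1]$, this is immediate by monotonicity in $J$. Otherwise $k\in J$, and $|J|\le k-1$ lets me pick some $j^*\in[k-1]\setminus J$; setting $J':=(J\setminus\{k\})\cup\{j^*\}$ and decomposing each sum according to whether $k$ (resp.\ $j^*$) belongs to $S$, the difference reduces to a nonnegative multiple of $b_{j^*}-b_k\ge 0$ by the ordering $b_1\ge\cdots\ge b_k$. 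Thus the sum over $J$ is at most the sum over $J'\subseteq[k-1]$, and monotonicity finishes. The second inequality $\Delta_{\mathcal{B}}^{r-1}\le\Delta_{b-b_k,k-1}^{r-1}$ is the already-noted consequence of Theorem~\ref{GTr} stated just before the claim.

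The main obstacle is the swapping step in the last paragraph: $v$'s neighborhood in $B$ might avoid a large part $B_j$ with $j<k$ rather than the smallest part $B_k$, so one cannot simply drop a part to land in $[k-1]$; the monotonicity $b_1\ge\cdots\ge b_k$ is exactly what is needed to reduce an arbitrary $(k-1)$-subset of $[k]$ to the canonical choice $[k-1]$.
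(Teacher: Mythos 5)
Your proof is correct and follows essentially the same route as the paper: restrict to $N_G(v)\subseteq B$, observe the induced subgraph is complete multipartite hitting at most $k-1$ parts by $K_{k+1}$-freeness, and compare with $K[\mathcal{B}]$. In fact your explicit swap handling the case $k\in J$ (using $b_{j^*}\ge b_k$) is more careful than the paper, which simply asserts $\mathcal{B}'=\cup_{i\in I(v)}\{B_i\}\subseteq\mathcal{B}$ even though $I(v)$ may contain the index $k$; the paper's conclusion still holds for exactly the monotonicity reason you spell out.
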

\begin{proof}
Let $T$ be the vertex set of a copy of $K_r$ covering $v$. Then $T\cap A=\{v\}$ and $|T\cap B|=r-1$ because $d_{G[A]}(v)=0$. Therefore, $G[T\cap B]\cong K_{r-1}$.  Note that $N_G(v)\subset B$. We have $d_{G}^{(r-1)}(v)\le N(G[N_{G}(v)],K_{r-1})$. 

Let $I(v)=\{i\in[k] : N_G(v)\cap B_i\not=\emptyset\}$.  Apparently, $|I(v)|\le k-1$. Otherwise, $I(v)=[k]$. Note that $G[B]=K[B_1, \ldots, B_k]$ in this case. Thus $G[B\cup\{v\}]=K[\{v\}, B_1,\ldots, B_k]$ contains a copy of $K_{k+1}$, a contradiction.
Now let $\mathcal{B}'=\cup_{i\in I(v)}\{B_i\}\subseteq \mathcal{B}$. Then 
$$d_{G}^{(r-1)}(v)\le N(G[N_{G}(v)],K_{r-1})\le N(K[\mathcal{B}'], K_{r-1})\le N(K[\mathcal{B}],K_{r-1})=\Delta_{\mathcal{B}}^{r-1}.$$
\end{proof}

When $A$ is an independent set of $G$, we have the following claim. 
\begin{claim}\label{A=0}
If $A$ is an independent set of $G$, then $G\cong G_{k}(n,s)$.
\end{claim}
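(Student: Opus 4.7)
The plan is to derive an upper bound $N(G,K_r)\le g_{n,k,r}(s)$ that is tight precisely when $G\cong G_k(n,s)$, and then use the extremality of $G$ together with the strict monotonicity of $g_{n,k,r}$ from Observation~\ref{calculation}(2) to force equality.

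Because $A$ is independent, each component of $G-B$ is a singleton, so $|A|=n-s$ and $|B|=s$. By Claim~\ref{B}, $G[B]$ is complete $\ell$-partite with $\ell\le k$ and parts $B_1,\ldots,B_k$ of sizes $b_1\ge\cdots\ge b_k\ge 0$. Since $A$ is independent, no copy of $K_r$ in $G$ uses two vertices of $A$, so
\begin{equation*}
N(G,K_r)=N(G[B],K_r)+\sum_{v\in A}d_G^{(r-1)}(v).
\end{equation*}
Claim~\ref{independent} gives $d_G^{(r-1)}(v)\le\Delta_{\mathcal{B}}^{r-1}\le\Delta_{s-b_k,k-1}^{r-1}$ for every $v\in A$. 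Splitting the copies of $K_r$ inside $G[B]$ according to whether they meet $B_k$ and applying Theorem~\ref{GTr} to $K[B_1,\ldots,B_{k-1}]$ (a $K_k$-free graph on $s-b_k$ vertices) yields
\begin{equation*}
N(G[B],K_r)\le \Delta_{s-b_k,k-1}^r+b_k\Delta_{s-b_k,k-1}^{r-1}.
\end{equation*}
Combining these bounds gives $N(G,K_r)\le \Delta_{s-b_k,k-1}^r+(n-s+b_k)\Delta_{s-b_k,k-1}^{r-1}=g_{n,k,r}(s-b_k)$.

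By Observation~\ref{calculation}(2), $g_{n,k,r}$ is strictly increasing, so $g_{n,k,r}(s-b_k)\le g_{n,k,r}(s)=N(G_k(n,s),K_r)$ with equality iff $b_k=0$. Since $G_k(n,s)$ is $k$-partite with matching number at most $s$, it is $\{K_{k+1},M_{s+1}\}$-free, so the extremality of $G$ yields $N(G,K_r)\ge N(G_k(n,s),K_r)$. Chaining these inequalities forces $b_k=0$ and equality in each step. The uniqueness clause of Theorem~\ref{GTr} then gives $G[B]\cong T_{k-1}(s)$, while the equality $d_G^{(r-1)}(v)=\Delta_{s,k-1}^{r-1}$ combined with the strict monotonicity of $\Delta_{t,k-1}^{r-1}$ in $t$ (a consequence of Observation~\ref{calculation}(1)) forces $N_G(v)=B$ for every $v\in A$. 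Thus $G=\overline{K_{n-s}}\vee T_{k-1}(s)=G_k(n,s)$.

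The main obstacle will be the equality analysis in the final step, in particular verifying that $d_G^{(r-1)}(v)=\Delta_{s,k-1}^{r-1}$ really pins down $N_G(v)=B$ rather than some proper subset of $B$ whose induced subgraph in $T_{k-1}(s)$ still realizes the extremal $K_{r-1}$-count; this uses the strict increase of $\Delta_{t,k-1}^{r-1}$ together with the fact that $G[N_G(v)]$ is an induced subgraph of $G[B]=T_{k-1}(s)$.
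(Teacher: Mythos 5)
Your proof is correct and follows essentially the same route as the paper: the same decomposition $N(G,K_r)=N(G[B],K_r)+\sum_{v\in A}d_G^{(r-1)}(v)$, the same bound $g_{n,k,r}(s-b_k)$ via Claim~\ref{independent} and Theorem~\ref{GTr}, and the same appeal to the strict monotonicity in Observation~\ref{calculation}(2) plus extremality to force $b_k=0$ and equality throughout. Your closing equality analysis (that $d_G^{(r-1)}(v)=\Delta_{s,k-1}^{r-1}$ forces $N_G(v)=B$) is in fact slightly more detailed than the paper's, which simply asserts $G[B,A]=K[B,A]$.
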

\begin{proof}
Let $\mathcal{B}_0=\{B_1,\cdots, B_k\}$. Recall that $\mathcal{B}=\{B_1,\ldots,B_{k-1}\}$.
Then $$N(K[\mathcal{B}_0],K_r)=\Delta_{\mathcal{B}_0}^r=\Delta_{\mathcal{B}}^{r-1}|B_k|+\Delta_{\mathcal{B}}^r.$$
Note that $b+\sum_{i=1}^m\frac{a_i-1}2\le s$. Hence when $A=\cup_{i=1}^m A_i$ is an independent set, we have $a_1=\ldots=a_m=1$ and $b\le s$. We can also suppose $b-b_k\ge 3$ since the small cases are easy to check.
By Claim~\ref{independent}, 
\begin{eqnarray*}
	N(G,K_r)&\le&\Delta_{\mathcal{B}_0}^r+\sum_{v\in A}d^{r-1}_G(v)\\
	        &\le  &\Delta_{\mathcal{B}_0}^r+(n-b)\Delta_{\mathcal{B}}^{r-1}\\
	        &=&(n-b+|B_k|)\Delta_{\mathcal{B}}^{r-1}+\Delta_{\mathcal{B}}^r\\
	        &\le& [n-(b-b_k)]\Delta_{b-b_k,k-1}^{r-1}+\Delta_{b-b_k,k-1}^{r}\\
	        &=& g_{n,k,r}(b-b_k)\mbox{.}
\end{eqnarray*}
By Observation~\ref{calculation} (2), we have  
$$N(G,K_r)\le g_{n,k,r}(b-b_k)\le g_{n,k,r}(s)=N(G_k(n,s), K_r).$$
 When the equality holds, we must have $b_k=0$, $b=s$, $G[B]\cong T_{k-1}(s)$ by Theorem~\ref{GTr}, and $G[B,A]=K[B,A]$. This implies that $G\cong G_k(n,s)$.
\end{proof}

\begin{claim}\label{A}
$a_2=a_3=\cdots=a_m=1$.
\end{claim}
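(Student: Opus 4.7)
The plan is a proof by contradiction. Assume $a_2 \geq 3$, so at least two components $G_1, G_2$ of $G-B$ have size $\geq 3$. The goal is to produce a graph $G^{*}$ on the same vertex set that is $\{K_{k+1}, M_{s+1}\}$-free, satisfies $N(G^{*},K_r)\geq N(G,K_r)$, and has strictly more singleton components in its Tutte–Berge decomposition than $G$. This contradicts the maximality condition (*) (combined with the extremality of $G$).

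The construction of $G^{*}$ is to ``dismantle'' $G_2$: pick a subset $S\subseteq A_2$ with $|S|=(a_2-1)/2$, and set $B^{*}=B\cup S$. Delete all edges of $G$ incident to $A_2$. Distribute the vertices of $S$ among the existing partition classes $B_1,\ldots,B_{k-1}$ of $G[B]$ (Claim~\ref{B}), and add all resulting cross-class edges so that $G^{*}[B^{*}]$ is a complete multipartite graph with at most $k-1$ parts extending $G[B]$. Finally, connect each vertex of $A^{*}:=V(G)\setminus B^{*}$ (including the $(a_2+1)/2$ vertices of $A_2\setminus S$, which become isolated in $G^{*}-B^{*}$) to $B^{*}$ in the same pattern used in the proof of Claim~\ref{A=0}, namely by mimicking the $G_k(n,s)$-type attachment, subject to $K_{k+1}$-freeness. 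The vertices of $A_1,A_3,\ldots,A_m$ and their internal edges are kept unchanged.

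The verification breaks into three parts. First, the Tutte–Berge condition with set $B^{*}$ holds: the components of $G^{*}-B^{*}$ are $G_1,G_3,\ldots,G_m$ together with $(a_2+1)/2$ singletons from $A_2\setminus S$, all odd, and
\[
|B^{*}|+\sum_{i}\frac{|V_i^{*}|-1}{2}
=\left(b+\tfrac{a_2-1}{2}\right)+\sum_{i\neq 2}\tfrac{a_i-1}{2}=s,
\]
so $G^{*}$ is $M_{s+1}$-free by Theorem~\ref{Tutte-Berge}. Second, $G^{*}$ is $K_{k+1}$-free because the partition of $B^{*}$ has at most $k-1$ classes and $A^{*}$ plays the role of a $k$-th independent class. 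Third, for the clique count we split $N(G,K_r)$ into contributions from $G[B]$, from $G_2$ together with its edges to $B$, and from the remaining components; the first and third contributions are already dominated termwise in $G^{*}$, while the contribution involving $A_2$ is replaced in $G^{*}$ by (i) the $K_r$'s in the enlarged $G^{*}[B^{*}]$ beyond those in $G[B]$ and (ii) the $d_{G^{*}}^{(r-1)}(v)$ contributions of the $(a_2+1)/2$ new singletons. Using Claim~\ref{independent} to bound the relevant clique-degrees and Observation~\ref{calculation}(2) (which says $g_{n,k,r}(t)$ is strictly increasing in $t$), one sees that transferring $(a_2-1)/2$ vertices from $A_2$ into $B$ and turning the rest into singletons strictly dominates the previous configuration. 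Finally $|I_{G^{*}}(A^{*})|=|I_G(A)|+\tfrac{a_2+1}{2}>|I_G(A)|$, contradicting assumption (*).

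The main obstacle is step (iii): carefully accounting for $K_r$'s that use two or more vertices of $A_2$ (so one cannot simply add up $d_G^{(r-1)}(v)$ over $v\in A_2$), and verifying that the cliques destroyed within $G_2$ plus the $A_2$–$B$ crossing cliques are dominated by the new cliques created by inserting $S$ into the multipartite structure of $B$. The leverage for this comparison is precisely the monotonicity in Observation~\ref{calculation}(2), together with the fact that $G^{*}[B^{*}]$, being complete multipartite on at most $k-1$ parts of almost-equal sizes, achieves a clique count close to $\Delta_{b+(a_2-1)/2,k-1}^{r}$, matching the upper bound obtained via Claim~\ref{independent} applied to the new singletons.
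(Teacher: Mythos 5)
There is a genuine gap in step (iii), and it is the heart of the matter. Your construction needs $N(G^{*},K_r)\ge N(G,K_r)$, and you justify the key comparison --- that deleting $G_2$, absorbing $(a_2-1)/2$ of its vertices into $B$, and turning the rest into singletons ``strictly dominates the previous configuration'' --- by appealing to Observation~\ref{calculation}(2). But that observation only says $g_{n,k,r}(t)$, the clique count of the specific graph $G_k(n,t)$, increases in $t$; it does not compare the cliques you destroy (those inside $G_2$ and those crossing from $A_2$ to $B$) with the cliques you create. That comparison is false in general: take $k=r=3$, $B=\emptyset$, and $G_2\cong T_3(2s_2+1)$ with $s_2=4$, so $G_2\cong K_{3,3,3}$ contributes $27$ triangles, while replacing it by the structure $T_2(4)\vee\overline{K_{5}}\cong K_{5,2,2}$ on the same $9$ vertices yields only $20$. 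The entire reason Theorem~\ref{main1} is a maximum of \emph{two} quantities is that a dense Tur\'an-type component can beat the ``$B$ plus independent set'' structure, and the paper only resolves which side wins at the very end, via the convexity of $f_{n,k,r,s}(b)$ on $[0,s-1]$ (which permits $f$ to \emph{decrease} as $b$ grows before increasing again). So the monotonicity you invoke is not available at the point where you need it, and extremality of $G$ cannot rescue you, since the contradiction with assumption (*) requires you to prove $N(G^{*},K_r)\ge N(G,K_r)$, not merely assume it. There are secondary problems as well: rewiring all of $A^{*}$ to $B^{*}$ ``in the $G_k(n,s)$-type pattern, subject to $K_{k+1}$-freeness'' is not well defined when components such as $A_1$ retain internal edges (a full join to a complete $(k-1)$-partite $B^{*}$ creates $K_{k+1}$'s), so your claim that the contributions of $G[B]$ and of $A_1,A_3,\dots,A_m$ are ``dominated termwise'' is unsupported.

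The paper sidesteps all of this with a purely local argument: it picks $v_0\in A$ of \emph{maximum} $(r-1)$-clique-degree and repeatedly applies the switching $u\rightarrow v_0$ to pairs $u_1,u_2$ of vertices of $A_2$ (chosen so that $G[A_2\setminus\{u_1,u_2\}]$ stays connected). Each switch keeps the same Tutte--Berge set $B$, preserves $\{K_{k+1},M_{s+1}\}$-freeness, and cannot decrease $N(\cdot,K_r)$ by Proposition~\ref{PROP:p1} precisely because $v_0$ was chosen of maximum clique-degree; after $(a_2-1)/2$ rounds $A_2$ shrinks to a singleton, contradicting (*). If you want to salvage your approach, you would need to prove the dismantling inequality directly, which essentially amounts to redoing the paper's final $f_{n,k,r,s}$ analysis inside the claim --- and even then it only holds at the endpoints of the range of $b$, not pointwise.
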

\begin{proof}
If $A$ is an independent set of $G$, then we are done. Now suppose $|G[A]|>0$. Then $b<s$. 
Let $v_0$ be a vertex in $A$ with $d_{G}^{(r-1)}(v_0)=\max\limits_{v\in A}d_{G}^{(r-1)}(v)$. Without loss of generality, suppose $v_0\in A_1$. 

If $d_{G[A]}(v_0)=0$, let $G'$ be the resulting graph by applying the switching operations $u\rightarrow v_0 $ for all vertices $u\in A\setminus\{v_0\}$ one by one. Then we have $|G'[A]|=0$. By Proposition~\ref{PROP:p1} (and its proof), $N(G',K_r)\ge N(G,K_r)$. With the same discussion as in the proof of Lemma~\ref{B}, we have that $G'$ is still $\{K_{k+1},M_{s+1}\}$-free. But $|I_{G'}(A)|=m>|I_G(A)|$, a contradiction to the assumption (*).

If $d_{G[A]}(v_0)>0$, then $a_1\ge 3$. If $a_2=\ldots=a_m=1$, we are done. Now, without loss of generality, assume $a_2\ge 3$. Since $G[A_2]$ is connected, we can pick two vertices, say $u_1,u_2$ in $A$ such that $G[A_2\backslash\{u_1,u_2\}]$ is still connected ($u_1, u_2$ exist, for example, we can take two leaves of a spanning tree of $G[A_2]$). 
Let $G_1$ be the resulting graph by applying the switching operations $u_1\rightarrow v_0$ and $u_2\rightarrow v_0$ one by one. 
With similar discussion as in the above case, we have $N(G_1,K_r)\ge N(G,K_r)$ and $G_1$ is $\{K_{k+1},M_{s+1}\}$-free. 
Continue the process after $t=\frac{a_2-1}{2}$ steps, we obtain a graph $G_t$ with $N(G_t,K_r)\ge N(G,K_r)$ and $G_t$ is $\{K_{k+1},M_{s+1}\}$-free. But $|I_{G_t}(A)|=|I_G(A)|+1$, a contradiction to the assumption (*).
\end{proof}

Now by Claim~\ref{A} and Theorem~\ref{Tutte-Berge}, we have $s':=b+\frac{a_1-1}{2}\le s$. Thus, $a_1= 2s'-2b+1$ and then $|B\cup A_1|=a_1+b=2s'-b+1$. Note that $0\le b\le s'$. By Claim~\ref{independent}, for vertex $v\not\in B\cup A_1$, $d_{G}^{(r-1)}(v)\le \Delta_{b-b_k,k-1}^{r-1}\le \Delta_{b,k-1}^{r-1}$. Also, since $G[B\cup A_1]$ is $K_{k+1}$-free, by Theorem~\ref{GTr}, $N(G[B\cup A_1],K_r)\le\Delta_{2s'-b+1,k}^{r}$. Therefore,
$$N(G,K_r)\le \Delta_{2s'-b+1,k}^{r}+(n-2s'+b-1)\Delta_{b,k-1}^{r-1}\mbox{.}$$
Define $f_{n,k,r,s'}(b):=\Delta_{2s'-b+1,k}^{r}+(n-2s'+b-1)\Delta_{b,k-1}^{r-1}$.
If $b=0$, then $f_{n,k,r,s'}(0)=N(T_{k}(2s'+1),K_{r})\le N(T_{k}(2s+1),K_{r})$, and the proof is done. If $b=s'$, then $a_1=1$ and thus $A$ is an independent set of $G$, the proof is done by Claim~\ref{A=0}. In particular, $f_{n,k,r,s'}(s')\le N(G_k(n,s'),K_r)\le N(G_k(n,s),K_r)$.

By Observation~\ref{calculation} (1), for $0\le b\le s'-1$,
$$f_{n,k,r,s'}(b+1)-f_{n,k,r,s'}(b)=-\Delta_{(2s'-b)-\lfloor\frac{2s'-b}{k}\rfloor,k-1}^{r-1}+(n-2s'+b)\Delta_{b-\lfloor\frac{b}{k-1}\rfloor,k-2}^{r-2}+\Delta_{b,k-1}^{r-1}\mbox{.}$$
For fixed $k$ and $r$, $\Delta_{t,k}^r$ is an increasing function of $t$, and  $t-\lfloor\frac{t}{k}\rfloor$ is a non-decreasing function  of $t$.  It is easy to check that $g(b)=f_{n,k,r,s}(b+1)-f_{n,k,r,s'}(b)$ is an increasing function on  $0\le b\le s'-1$. This implies that $f_{n,k,r,s'}(b)$ is convex on $[0, s'-1]$. 
Therefore, 
$$f_{n,k,r,s'}(b)\le \max\{f_{n,k,r,s'}(0),f_{n,k,r,s'}(s')\}\le\max\{N(T_k(2s+1),K_r),N(G_k(n,s),K_r)\}\mbox{.}$$
\end{proof}

Now we prove for Theorem~\ref{mainstar}.
\begin{proof}[Proof of Theorem~\ref{mainstar}:]
Let $G$ be the extremal graph mentioned above  with the maximum number of $S_r$.
\begin{claim}~\label{As}
$a_2=a_3=\cdots=a_m=1$.
\end{claim}
\begin{proof}
For any $i\in [m]$, a pair of vertices $\{u,v\}\subset A_i$ is said to be $2$-switchable if $G[A_i\backslash\{u,v\}]$ is still connected.
For any distinct $i, j\in [m]$ and any two ordered pairs $\{u_1, v_1\}\subset A_i$ and $\{u_2, v_2\}\subset A_j$, we call $(u_1,v_1)\rightarrow (u_2,v_2)$ a $2$-switching if $\{u_1, v_1\}$ is  $2$-switchable. we call it a reversible $2$-switching if both  $\{u_1, v_1\}$ and  $\{u_2, v_2\}$ are $2$-swithable.
A $2$-switching $(u_1,v_1)\rightarrow (u_2,v_2)$ is said to be good if $N(G_{(u_1,v_1)\rightarrow (u_2,v_2)},S_r)>N(G,S_r)$; it is said to be bad if $N(G_{(u_1,v_1)\rightarrow (u_2,v_2)},S_r)<N(G,S_r)$; otherwise, $N(G_{(u_1,v_1)\rightarrow (u_2,v_2)},S_r)=N(G,S_r)$ and we say it is a fair  $2$-switching. Clearly, there is no good $2$-switching in $G$.

Since there is no  good $2$-switching, any $2$-switching $(u_1,v_1)\rightarrow (u_2,v_2)$ is bad or fair. If it is bad and reversible, then by proposition~\ref{PROP:p+},  $(u_2,v_2)\rightarrow (u_1,v_1)$ must be a good $2$-switching, a contradiction. In other words, a reversible $2$-switching cannot be bad, so it must be fair.

By Lemma~\ref{Ai}, for any $i\in[m]$, $G[A_i]$ is a complete multipartite graph. One can check that there is at most one pair within $A_i$ that is not $2$-switchable (only if $a_i\ge 5$ and all the vertices in $A_i$ other than this pair form a partition set of $G[A_i]$). Hence we can always pick some $2$-switchable pair in $A_i$ if $a_i\ge 3$.

Now Suppose on the contrary that $a_1\ge a_2\ge 3$. suppose $\{u_0,v_0\}\subset A_1$ is $2$-switchable. For any $2$-switchable pair $\{u,v\}\in A_i$ for some $i\ge 2$, $\{u,v\}\rightarrow\{u_0,v_0\}$ is reversible, so it must be fair. Thus, $G_{\{u,v\}\rightarrow\{u_0,v_0\}}$ is also an extremal graph. By repeatedly doing $2$-switching $\{u,v\}\rightarrow\{u_0,v_0\}$ for $2$-switchable pair $\{u,v\}\in A_i$ with $i\ge 2$ (note that during these operations, $\{u_0,v_0\}$ keeps being $2$-switchable), we end up with a graph $G'$ which is also an extremal graph. Moreover, there is no $2$-switchable pair in $A_i$ for any $i\ge 2$, which implies that $a_i=1$ for $i\ge 2$. So $I_{G'}(A)=m-1>I_{G}(A)$, a contradiction.
\end{proof}
By Lemma~\ref{B} and $G$ is $K_{k+1}$-free, $G[B]$ is a complete $k$-partite graph (some of its partition set may be empty). Suppose these partition set be $B_1$, $B_2$, $\cdots$, $B_k$ and let $b_i=|B_i|$ for $i\in[k]$. If there exists some $i\in[k]$ with $b_i=0$, then let $B_k=\emptyset$ without loss of generality. Otherwise, by Lemma~\ref{Ai1}, Claim~\ref{As} and since $n\ge 2(s+1)(r+1)$, it holds $A\backslash A_1$ is not empty and all its vertices share the same neighborhood. Clearly, at least one of $B_1$, $\cdots$, $B_k$ is not adjacent to $A\backslash A_1$, or we would get a copy of $K_{k+1}$ in $G$. In this case, we let $B_k$ be a partition set not adjacent to $A\backslash A_1$. 
Now define
$$f'_{n,k,r,s}(b_1,\cdots,b_k,a_1)=g_{n,k,r,b,a_1+b}(b_1,\cdots,b_k)+a_1\binom{a_1+b-1}{r}+(n-a_1-b)\binom{b}{r}.$$
Note that: the leaves of a star whose center is in $B_i$ ($i\neq k$) must be in $V(G)\backslash B_i$; the leaves of a star whose center is in $B_k$ must be in $A_1\cup(B\backslash B_k)$; the leaves of a star whose center is in  $A_1$ must be in $A_1\cup B$; the leaves of a star whose center is in  $A\backslash A_1$ must be in $B$. Therefore, we have $N(G,S_r)\le f'_{n,k,r,s}(b_1,\cdots,b_k,a_1)$.
Now by Proposition~\ref{calc} (1), 
$$f'_{n,k,r,s}(b_1, \cdots, b_{k-1}, b_k, a_1)<f'_{n,k,r,s}(b_1',\cdots, b_{k-1}',0,a_1),$$
where $\sum_{i=1}^{k-1}b_i'=b=\sum_{i=1}^{k}b_i$ and $|b_i'-b_j'|\le 1$ for any $i,j\in[k-1]$.
By Proposition~\ref{calc} (2) and the fact that $a_1=2s'+1-2b$ for some $s'\le s$, 
$$f'_{n,k,r,s'}(b_1',\cdots, b_{k-1}',0,a_1)<f'_{n,k,r,s'}(b_1'',\cdots, b_{k-1}'',0,1),$$
where $\sum_{i=1}^{k-1}b_i''=s'$ and $|b_i''-b_j''|\le 1$ for any $i,j\in[k-1]$.
Therefore, $N(G,S_r)\le f'_{n,k,r,s'}(b_1'',\cdots, b_{k-1}'',0,1)$. 
One can check that $f'_{n,k,r,s'}(b_1'',\cdots, b_{k-1}'',0,1)=N(G_k(n,s'),S_r)\le N(G_k(n,s),S_r)$, which means we are done.
\end{proof}

\section{Concluding Remarks}
In this paper, we study on the function $\ex(n,T,\{K_{k+1},M_{s+1}\})$ and determine the exact values when $T= K_r$ and $T=S_r$. However, for $T=S_r$, our result holds only when $n\ge 2(s+1)(r+1)$. 
One may conjecture that $\ex(n,S_r,\{K_{k+1},M_{s+1}\})=\max\{{N}(T_k(2s+1),S_r),{N}(G_k(n,s),S_r)\}$ for general $n\ge 2s+1$. Unluckily, this is not true. In fact, when $r$ is somehow larger than $k$ (for example $r>20k$), one may find some conterexamples. Hence, we give the following conjecture instead.
\begin{conj}
There exists some fixed positive number $\alpha$, for $n\ge 2s+1$ and $2\le r\le \alpha k$, 
$$\ex(n,S_r,\{K_{k+1},M_{s+1}\})=\max\{{N}(T_k(2s+1),S_r),{N}(G_k(n,s),S_r)\}\mbox{.}$$
\end{conj}

\noindent{\bf Acknowledgment:} {The work was supported by National Natural Science Foundation of China (No.12401455) to Yue Ma;  National Natural Science Foundation of China (No.12471336, 12071453) and Innovation Program for Quantum Science and Technology (2021ZD0302902) to Xinmin Hou.}


\end{document}